
\documentclass[10pt,a4paper]{amsart}

\usepackage[latin1]{inputenc}
\usepackage{tikz}
\usepackage{caption}
\captionsetup[figure]{font=footnotesize}
\usepackage{amsmath}
\usepackage{amssymb}
\usepackage{graphics, float}
\usepackage{amsthm}
\usepackage{bm}
\usepackage{hyperref}
\hypersetup{colorlinks=true,citecolor=blue,linkcolor=blue}%
\usepackage{mathrsfs}
\usepackage{enumitem}
\usepackage{physics}
\DeclareMathAlphabet{\mathpzc}{OT1}{pzc}{m}{it}
\vfuzz2pt 
\hfuzz2pt 
\newtheorem{theorem}{Theorem}
\newtheorem{corollary}[theorem]{Corollary}
\newtheorem{lemma}[theorem]{Lemma}
\newtheorem{proposition}[theorem]{Proposition}

\theoremstyle{definition}
\newtheorem{definition}[theorem]{Definition}
\theoremstyle{remark}


\newcommand{\R}{\mathbb{R}}
\newcommand{\T}{\mathbb{T}}

\newcommand{\C}{\mathbb{C}}

\newcommand{\vertiii}[1]{{\left\vert\kern-0.25ex\left\vert\kern-0.25ex\left\vert #1 
    \right\vert\kern-0.25ex\right\vert\kern-0.25ex\right\vert}}

\begin{document}

	\title[]{\large Complex Analytic Structure of Stationary Flows of an Ideal Incompressible Fluid} 
	\author{Aleksander Danielski}	
	\author{Alexander Shnirelman}
		\address{Department of Mathematics and Statistics, Concordia University\\
		Montreal, QC H3G 1M8, Canada\\}
		\email{al\_danielski@hotmail.com, alexander.shnirelman@concordia.ca}

		\begin{abstract}
In this article we introduce the structure of an analytic Banach manifold in the set of stationary flows without stagnation points of the ideal incompressible fluid in a periodic 2-d channel bounded by the curves $y=f(x)$ and $y=g(x)$ where $f, g$ are periodic analytic functions. The work is based on the recent discovery (Serfati, Shnirelman, Frisch, and others) that for the stationary flows the level lines of the stream function (and hence the flow lines) are real-analytic curves. The set of such  functions is not a linear subspace of any reasonable function space. However, we are able to introduce in this set a structure of a real-analytic Banach manifold if we regard its elements as collections of level lines parametrized by the function value. If $\psi(x,y)$ is the stream function, then the flow line has equation $y=a(x,\psi)$ where $a(\cdot,\cdot)$ is a ``partially analytic" function. This means that this function is analytic in the first argument while it has a finite regularity in the second one. We define the spaces of analytic functions on the line, analogous to the Hardy space, and the spaces of partially analytic functions. The equation $\Delta\psi=F(\psi)$ is transformed into a quasilinear equation $\Phi(a)=F$ for the function $a(x,\psi)$. Using the Analytic Implicit Function Theorem in the complex Banach space, we are able to prove that  for the functions $f(x), g(x)$ close to constant the solution $a(x,\psi; F, f, g)$ exists and depends analytically on parameters $F, f, g$.
\end{abstract}
	
	\maketitle

{\large

\section{Stationary solutions of the 2D Euler equations}
\pagenumbering{arabic}

The Euler equations, describing the flow of an incompressible, inviscid fluid of uniform density were first published by Euler in 1757 (\cite{Eu}). In the absence of external forces, they take the form:
\begin{equation}\label{euler}
\pdv{\mathbf{u}}{t} + \mathbf{u} \cdot \nabla \mathbf{u} + \nabla p = 0,  \qquad \nabla \cdot \mathbf{u} = 0.
\end{equation}
Here, $\mathbf{u}(x,t)$ is the vector field describing the fluid velocity at any moment in time and $p(x,t)$ is a scalar field describing the pressure exerted on the fluid particle occupying position $x$ by the surrounding fluid. If the fluid occupies a domain with boundary, then additionally, $\mathbf{u}$ is required to be tangent to this boundary. Taking the divergence of the first equation, $\mathbf{u}$ is seen to define $p$ uniquely up to an additive constant by a Poisson equation. The usual problem then is to find $\mathbf{u}(x,t)$ given initial velocity $\mathbf{u}(x,0)$.

In three dimensions, local in time existence and uniqueness of classical solutions was proved in the 1920s by Lichtenstein (\cite{Li}) and Gyunter (\cite{Gu}). In two dimensions, Wolibner (\cite{Wo}) proved such solutions extend globally in time. More recently, it was discovered that despite finite regularity of the flow $\mathbf{u}$, the particle trajectories of solutions to \ref{euler} are real analytic curves. This striking fact was first proved by Serfati (\cite{Se}), later by Shnirelman (\cite{Sh1}) and others (\cite{Co}, \cite{FrZh}, \cite{InKaTo}, \cite{In}, \cite{Na}). In the case of stationary (time-independent) solutions, the particle trajectories coincide with flow lines (integral curves of $\mathbf{u}$) and thus the latter are real analytic curves as well.

This work is devoted to the simplest class of solutions to the 2D Euler equations, namely to the  stationary flows. Let us remind that any divergence-free vector field $\mathbf{u}$ can be defined uniquely (up to additive constant) by the stream function $\psi$, with $\mathbf{u} = \nabla^{\perp} \psi$. The flow lines of $\mathbf{u}$ coincide with the level lines of $\psi$ and stagnation points of $\mathbf{u}$ coincide with critical points of $\psi$. The vorticity $\omega = \nabla \times \mathbf{u}$ of a 2D vector field points normal to the flow, and can thus be taken as a scalar related to the stream function by the expression $\omega = \Delta \psi$. The 2D stationary solutions of \ref{euler} are those vector fields whose vorticities are constant along their flow lines (level lines of $\psi$). At least if $\psi$ is strictly monotone transversally to the flow lines, the stationary flows satisfy
\begin{equation}\label{stationary}
\Delta \psi = F(\psi), \qquad \psi \rvert_{\Gamma_i} = c_i,
\end{equation}
where $F(\psi)$ is the vorticity along the flow lines. The second condition indicates that $\psi$ should be constant along each component of the boundary $\Gamma$ of the domain the fluid occupies.

\v{S}ver\'{a}k and Choffrut (\cite{ChSv}) considered stationary flows in annular domains having no stagnation points. However, the stationary solutions do not form a smooth manifold in the space of divergence-free vector fields of finite regularity. To overcome this difficulty, the above authors considered the $C^\infty$-smooth vector fields, and they used the Nash-Moser-Hamilton implicit function theorem to provide a local parameterization of the set of stationary flows in the Fr\'{e}chet space of smooth vector fields. However, the analyticity of flow lines brings new possibilities to understand the structure of the set of stationary flows.

To this end we change the viewpoint and consider the flow field as a family of analytic flow lines non-analytically depending on parameter. We quantify the analyticity by introducing spaces of functions which have an analytic continuation to some complex strip containing the real axis such that on the boundary of the strip the function belongs to the Sobolev space. Further, we introduce the class of Sobolev functions of two variables which are analytic (in the above sense) with respect to one variable. Such functions describe the families of flow lines of stationary flows. These partially-analytic functions form a complex Banach space. The stationary solutions satisfy (in the new variables) a quasilinear elliptic equation whose local solvability is proved by using the analytic implicit function theorem in complex Banach spces. Thus we prove that the set of stationary flows is an analytic manifold in the complex Banach space of the flows (i.e. families of flow lines). In this work, this general idea is realized for the case of stationary flows in a periodic channel having no stagnation points.


\section{Analytic flow lines and the Sobolev space of partially-analytic functions}

Let us consider a periodic channel bounded by the graphs of two analytic functions. The lower and upper boundaries are defined by $y=f(x)$ and $y=g(x)$, respectively. Consider a stationary flow in the said channel for which $\psi=0$ along the lower boundary  $y=f(x)$ and $\psi=1$ along the upper one $y=g(x)$. Such a flow satisfies equation:
\begin{equation}\label{semilinear}
\Delta \psi(x,y) = F(\psi), \qquad \psi(x,f(x)) = 0, \qquad \psi(x,g(x)) = 1,
\end{equation}
for $x \in \T$, $f(x) \leq y \leq g(x)$. A typical example is the constant parallel flow, defined by $\psi = y$, with $f(x) = 0$, $g(x)=1$ and $F(\psi) = 0$.

If there are no stagnation points in the flow, we expect that each flow line is also the graph of some function in $(x,y)$ coordinates. In such a case, the collection of flow lines is monotonically parameterized by values of the stream function $\psi \in [0,1]$. We denote the family of such flow lines by function $y=a(x,\psi)$, with $(x,\psi) \in \Pi = \T \times [0,1]$. This coordinate change was introduced by von Mises in 1927 in his work on boundary layers (\cite{Mi}), and by Dubreil-Jacotin in her 1934 work on free surface waves (\cite{Du}). Since Barron's 1989 (\cite{Ba}) use of the coordinate change for the numerical study of flows over airfoils, it has seen numerous applications in computational problems, where it is known as the computational von Mises transform (see \cite{Ha} for a survey). Its success is owed in part to the fact that it converts complicated domains of flow to rectangular `computational' domains in $(x,\psi)$ coordinates.

Inverting the Jacobian of this transformation, one finds expressions for $\psi_x$ and $\psi_y$ in terms of $a(x,\psi)$. In particular, the velocity field in the new coordinates is given by
\begin{equation}
\mathbf{u}(\psi,\theta) =(\psi_y, -\psi_x) = \frac{(1,a_x)}{a_\psi}.
\end{equation}
We see then that stagnation points occur only when $a_\psi$ grows unbounded. Using the above relation, an application of the chain rule gives the following expression for the vorticity in our new coordinates:
\begin{equation}\label{quasilinear}
\Delta \psi = \Phi(a) = \frac{-1}{a_\psi}a_{xx} + \frac{2a_x}{a_\psi^2}a_{x \psi} - \frac{1+a_x^2}{a_\psi^3}a_{\psi \psi}.
\end{equation}
$\Phi(a)$ is a second order quasilinear operator of form $Aa_{xx} + 2Ba_{x\psi} + Ca_{\psi \psi}$. Such an operator is elliptic if $AC-B^2>0$. We immediately see that since $AC-B^2 =1/a_\psi^4$, the operator $\Phi$ is elliptic away from any stagnation points of the flow (where the ellipticity degenerates).

We conclude that under the coordinate change $\psi(x,y) \to y=a(x,\psi)$, the equation of stationary flow in the periodic channel without fixed points transforms to the quasilinear elliptic boundary value problem:
\begin{equation}\label{NLBVP}
\Phi(a) = F(\psi), \qquad a(x,0) = f(x), \qquad a(x,1) = g(x),
\end{equation}
to be solved for $y=a(x,\psi)$, given boundary data $f(x)$ and $g(x)$ and vorticity $F(\psi)$. Here $(x,\psi)$ is taken in the periodic strip $\Pi = \T \times [0,1]$. 

Solving this equation is the main objective of our study and we should start by defining the function spaces appropriate for this goal. Let us briefly discuss what properties such a space should have. First, we remind that $y=a(x,\psi)$ represents the family of analytic flow lines used to describe our flows. That is, for each fixed $\psi$, $x \to a(x,\psi)$ parameterizes the given flow line and thus $a(x,\psi)$ should be analytic with respect to $x$. Secondly, $\psi \to a(\cdot,\psi)$ parameterizes the family of such flow lines, and thus the dependence on $\psi$ is of finite regularity. For this reason, we call such functions partially-analytic. Having said that, the natural setting for posing elliptic boundary value problems are the usual Sobolev spaces. The isotropy in their regularity scale is a crucial element for establishing well-posedness of such equations. We can conclude that the candidate space for functions $a(x,\psi)$ should possess an isotropic Sobolev structure modelled on $H^m(\Pi)$ and a secondary regularity scale to quantify the analyticity in $x$.

Let us first see how this analyticity is incorporated to form a space for the individual flow lines, that is, analytic functions $y=a(x)$, $x \in \T$. Any real analytic function can be analytically continued to some complex domain containing the real axis. We consider those functions $y=a(x)$ which can be continued to the complex strip $\T_\sigma = \T \times i(-\sigma, \sigma)$, for some $\sigma > 0$. One of the Paley-Wiener theorems (a well known set of results relating the decay of a function's Fourier transform to its analytic extensions) characterizes the Hardy space of holomorphic functions in the complex strip. While the original result (\cite{PaWi}) considers continuations of $L^2(\R)$ functions, it adapts easily to continuations of $H^m(\T)$ functions with minimal modification.

\begin{definition}
Let $a(x)$ be a function on the circle $\T$ with Fourier series $a(x) =  \sum_k \hat{a}_k e^{ikx}$. Define $X_\sigma^m(\T)$ to be the space of such functions admitting analytic continuations to the complex strip $\T_\sigma$ such that their restrictions to the boundaries are $H^m(\T)$ functions. The Paley-Wiener theorem gives two equivalent characterizations of this space:
	\begin{itemize}
		\item $X_\sigma^m(\T)$ is the space of functions $a(x)$ on the circle with norm
			\begin{equation*}
			\lVert a(x) \rVert_{X_\sigma^m}^2 = \sum_k (1+k^2)^m e^{2\sigma \lvert k \rvert} \lvert \hat{a}_k \rvert^2 < \infty.
			\end{equation*}
		\item $X_\sigma^m(\T)$ is the space of analytic functions $z= x+it \to a(z): \T_\sigma \to \C$ with norm
			\begin{equation*}
			\lVert a(z) \rVert_{X_\sigma^m}^2 = \lVert a(\cdot + i \sigma) \rVert_{H^m(\T)}^2 + \lVert a(\cdot - i \sigma) \rVert_{H^m(\T)}^2 < \infty.		
			\end{equation*}
	\end{itemize}
\end{definition}

$X_\sigma^m(\T)$ defines the space of individual complex analytic flow lines. Apart from the domain of analyticity, this space also specifies the behaviour of their possible complex singularities.

Next we define the subset of $H^m(\Pi)$ functions which are partially-analytic, that is, analytic in $x$. The restrictions of such functions to vertical sections defines a map $a \in H^m(\Pi) \to a(x,\cdot) \in H^m[0,1]$. The additional smoothness in $x$ prevents the typical half-order loss of regularity in this trace map. The Paley-Wiener theorem can be extended to such Banach-valued maps to characterize the Hardy space of partially analytic Sobolev functions.

\begin{definition}
Let $a(x,\psi)$ be a function on the periodic strip $\Pi$ with partial Fourier series $a(x,\psi) = \sum_k \hat{a}_k(\psi) e^{ikx}$. Define $Y_\sigma^m(\Pi)$ to be the space of such functions which are partially analytic in $x$ in the following equivalent sense:
	\begin{itemize}
		\item $Y_\sigma^m(\Pi)$ is the space of functions $a(x,\psi)$ on the periodic strip with norm
			\begin{equation*}
			\lVert a(x,\psi) \rVert_{Y_\sigma^m}^2 = \sum_{p+q=0}^m \sum_k k^{2q} e^{2\sigma \lvert k \rvert} \lVert D^p \hat{a}_k(\psi) \rVert_{L^2[0,1]}^2 < \infty.
			\end{equation*}
		\item $Y_\sigma^m(\Pi)$ is the space of analytic functions $z \to a(z,\cdot) : \T_\sigma \to H^m[0,1]$ with norm
			\begin{equation*}
			\lVert a(z,\psi) \rVert_{Y_\sigma^m}^2 = \lVert a(\cdot + i \sigma, \cdot) \rVert_{H^m(\Pi)}^2 + \lVert a(\cdot - i \sigma, \cdot) \rVert_{H^m(\Pi)}^2 < \infty.		
			\end{equation*}
	\end{itemize}
\end{definition}

The complex Banach space $Y_\sigma^m(\Pi)$ defines the families of complex analytic flow lines which make up the stationary flows in our formulation. This space has an isotropic Sobolev structure, modelled on $H^m(\Pi)$, appropriate for the posing of elliptic boundary value problems. Most of our usual intuition of Sobolev functions extends to this space of partially-analytic functions. In particular, derivatives $\partial_x$ and $\partial_\psi$ are bounded in $Y_\sigma^m \to Y_\sigma^{m-1}$, as are the restriction to individual flow lines $a \to a(\cdot, \psi) : Y_\sigma^m(\Pi) \to X_\sigma^{m-1/2}(\T)$. Also of importance, the space $Y_\sigma^m$ is an algebra for $m > 1$. In the following section, we apply the implicit function theorem in Banach spaces to establish local solutions of equation \ref{NLBVP} in our defined spaces.


\section{Implicit function theorem in complex Banach spaces}

Define the operator
\begin{equation}\label{operator}
T : (f,g,F,a) \to \big( \Phi(a) - F(\psi), a(x,0) - f(x), a(x,1) - g(x) \big)
\end{equation}
in the spaces
\begin{equation*}
X_\sigma^{m-1/2} \times X_\sigma^{m-1/2}\times H^{m-2}[0,1] \times Y_\sigma^m \to Y_\sigma^{m-2} \times X_\sigma^{m-1/2} \times X_\sigma^{m-1/2}.
\end{equation*}
The quasilinear boundary value problem \ref{NLBVP} for the complex flow lines can be expressed by the operator equation $T=0$. The space $X_\sigma^{m-1/2}$ describes the individual flow lines from which we prescribe the boundaries of the channel, $y=f(x)$ and $y=g(x)$. The prescribed complex vorticities $F(\psi)$ are taken in the Sobolev space $H^{m-2}[0,1]$. Finally the families of flow lines $y=a(x,\psi)$ used to describe our flows are taken in $Y_\sigma^m$. 

This equation has particular solution for data $f=0$, $g=1$ and $F=0$ given by $a= \psi$, corresponding to the constant parallel flow. That is $T(0,1,0,\psi) = 0$. The implicit function theorem gives condition for local solvability of the equation near this solution.

\begin{theorem}[Analytic implicit function theorem in complex Banach space]\ \\
Let $X,Y,Z$ be complex Banach spaces and $f: X \times Y \to Z$ be analytic in a neighbourhood of $(x_0,y_0) \in X \times Y$. Suppose $f(x_0,y_0) = 0$ and $\pdv{f}{y} (x_0,y_0) : Y \to Z$ is an isomorphism. Then there exists a neighbourhood of $(x_0,y_0) \in X \times Y$ in which the equation $f(x,y)=0$ has a unique solution, parameterized by an analytic function $y=g(x) : X \to Y$.
\end{theorem}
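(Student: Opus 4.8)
The plan is to convert $f(x,y)=0$ into a fixed-point problem solved by the Banach contraction principle with a parameter, and then to upgrade the regularity of the resulting solution from merely continuous to analytic by realizing it as a uniform limit of analytic maps. After translating, I may assume $(x_0,y_0)=(0,0)$. Write $L=\partial_y f(0,0)$, which is a Banach-space isomorphism $Y\to Z$ by hypothesis, and set
\[
\Phi(x,y) = y - L^{-1}f(x,y),
\]
so that $f(x,y)=0$ if and only if $\Phi(x,y)=y$. Since $f$ is analytic it is in particular $C^1$ with continuous Fr\'{e}chet derivative, and $\partial_y\Phi(0,0)=I-L^{-1}L=0$. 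By continuity of $\partial_y\Phi$ I can choose radii $r,\rho>0$ so that $\|\partial_y\Phi\|\le \tfrac12$ on $\overline{B_X(r)}\times\overline{B_Y(\rho)}$ and, after shrinking $r$ and using $f(0,0)=0$, so that $\|\Phi(x,0)\|\le \rho/2$ for $x\in\overline{B_X(r)}$.

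For each fixed $x\in\overline{B_X(r)}$ the map $y\mapsto\Phi(x,y)$ then sends $\overline{B_Y(\rho)}$ into itself --- since $\|\Phi(x,y)\|\le\|\Phi(x,y)-\Phi(x,0)\|+\|\Phi(x,0)\|\le\tfrac12\|y\|+\rho/2\le\rho$ --- and is a contraction with constant $\tfrac12$ by the mean value inequality. The contraction mapping theorem yields a unique fixed point $y=g(x)\in\overline{B_Y(\rho)}$, which is exactly the unique solution of $f(x,y)=0$ in this neighbourhood; this establishes existence together with the uniqueness assertion, and $g(0)=0$.

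It remains to show that $x\mapsto g(x)$ is analytic, and this is the step carrying the genuinely complex-analytic content. I would define the Picard iterates $g_0\equiv 0$ and $g_{n+1}(x)=\Phi(x,g_n(x))$. Each $g_n$ is analytic on $B_X(r)$ by induction: if $g_n$ is analytic then $x\mapsto(x,g_n(x))$ is analytic, and composing with the analytic map $\Phi$ preserves analyticity. The uniform contraction estimate gives $\sup_{x\in B_X(r)}\|g_{n+1}(x)-g_n(x)\|\le C\,2^{-n}$, so $g_n\to g$ uniformly on $B_X(r)$. Since a uniform limit of holomorphic maps between complex Banach spaces is again holomorphic, $g$ is analytic, and $f(x,g(x))=0$ follows by passing to the limit in the fixed-point identity.

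The main obstacle is precisely this analyticity upgrade: the contraction argument by itself produces only a continuous (indeed Lipschitz) solution, and none of the metric estimates see the complex structure. The cleanest way around it is to lean on the Weierstrass-type theorem that uniform limits of Banach-valued holomorphic maps are holomorphic; alternatively one can verify Gâteaux holomorphy directly by restricting $\lambda\mapsto g(x+\lambda h)$ to complex lines, applying the scalar convergence theorem, and invoking local boundedness (from continuity) to pass from Gâteaux to Fr\'{e}chet analyticity. Either route rests on the fact that the notion of analyticity in play is stable under composition and under uniform limits, so I would pin down that characterization at the outset.
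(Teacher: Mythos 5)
Your proposal is correct, but it is worth noting that the paper does not prove this theorem at all: it is stated as a known black-box result (the relevant background on holomorphy in complex Banach spaces being the cited monograph of Mujica), and the paper's actual work goes into verifying its \emph{hypotheses} for the operator $T$ --- analyticity of $T$ and invertibility of the linearization --- rather than into the implicit function theorem itself. Your argument is the standard and essentially complete proof: reduce to the fixed-point equation $y=\Phi(x,y)$ with $\Phi(x,y)=y-L^{-1}f(x,y)$, use $\partial_y\Phi(x_0,y_0)=0$ and continuity of the derivative to get a uniform contraction on a product of closed balls, and then obtain analyticity of the solution map by exhibiting it as a uniform limit of the Picard iterates, each analytic by stability of holomorphy under composition. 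You correctly identify the one genuinely complex-analytic ingredient, namely the Weierstrass-type theorem that a locally uniform limit of Banach-valued holomorphic maps is holomorphic (equivalently, Gâteaux holomorphy on complex lines plus local boundedness implies Fr\'{e}chet analyticity); this is exactly the point where the complex setting pays off, since in the real-analytic category the analogous limit theorem fails and one would instead have to differentiate the implicit equation and sum a majorant series. The only cosmetic gap is that you should state explicitly that the iterates remain in $\overline{B_Y(\rho)}$ (immediate by induction from the self-mapping property, starting from $g_0\equiv 0$) so that the composition defining $g_{n+1}$ stays inside the domain of analyticity of $\Phi$; with that remark the proof is complete and fully compatible with how the theorem is used in the paper.
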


Application of the implicit function theorem consists of two parts. First, we must prove that near the constant parallel flow, the map $T$ is analytic in our spaces. Second, we must show that the linearization at the constant parallel flow defines an isomorphism. A straightforward computation shows that $\Phi' = - \Delta$. That is, the linearization of the quasilinear map $\Phi(a)$ at $a = \psi$ is the Laplace operator on $\Pi$. Then the linearization of map $T$ defines the operator
\begin{equation}
\pdv{T}{a} (0,1,0, \psi) : a(x,\psi) \to \Big( -\Delta a(x,\psi) , a(x,0), a(x,1) \Big)
\end{equation}
and we must verify it defines an isomorphism in
\begin{equation*}
Y_\sigma^m(\Pi) \to Y_\sigma^{m-2}(\Pi) \times X_\sigma^{m-1/2}(\T) \times X_\sigma^{m-1/2}(\T).
\end{equation*}


\subsection{Nonlinear results}\ \\

Let us start by proving operator $T$ is analytic. The nonlinear map $\Phi(a)$ defined in \ref{quasilinear} can be viewed as the composition of derivative maps $a \to (a_x, a_\psi, a_{xx}, a_{x\psi}, a_{\psi \psi})$ and the rational function $\Phi(a_x, a_\psi, a_{xx}, a_{x\psi}, a_{\psi \psi})$. The latter, a superposition operator, the simplest of which are maps $u(x) \to f(u(x))$, are well behaved in Sobolev spaces of sufficient regularity. 

\begin{lemma}\ \\
Let $\Omega\subset \R^n$ be a bounded domain with smooth boundary and suppose $f \in C^m$ in a domain containing the image of $u$. Then $u(x) \to f(u(x)) : H^m(\Omega) \to H^m(\Omega)$ is a well-defined, continuous map for $m > n/2$. If additionally, $f \in C^{m+l}$, then $u \to f(u)$ is $C^l$.
\end{lemma}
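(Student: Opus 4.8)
The plan is to treat the map $N_f : u \mapsto f(u)$ as a Nemytskii (superposition) operator and to exploit two structural facts valid for $m > n/2$: the Sobolev embedding $H^m(\Omega) \hookrightarrow C^0(\bar\Omega) \cap L^\infty(\Omega)$, and the Banach algebra property of $H^m(\Omega)$. First I would settle well-definedness. Since $m > n/2$, any $u \in H^m$ is continuous and bounded, so its image lies in a compact set $K$ on which $f$ and all derivatives $f^{(j)}$, $j \le m$, are bounded by the hypothesis $f \in C^m$. Differentiating $f(u)$ via the Fa\`a di Bruno formula gives, for every multi-index $\alpha$ with $|\alpha| \le m$,
\begin{equation*}
\partial^\alpha\big(f(u)\big) = \sum f^{(j)}(u)\,\prod_{i=1}^{j}\partial^{\beta_i}u,
\end{equation*}
a finite sum over $1 \le j \le |\alpha|$ and decompositions $\beta_1 + \cdots + \beta_j = \alpha$ with each $|\beta_i| \ge 1$. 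The leading factor is controlled by $\lVert f^{(j)}(u)\rVert_{L^\infty} \le \sup_K |f^{(j)}|$, so it remains to bound each product of derivatives in $L^2(\Omega)$.

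This product estimate is the technical heart and the step I expect to be the main obstacle. For $\sum_i |\beta_i| = k \le m$, the plan is to invoke the Gagliardo--Nirenberg interpolation inequalities $\lVert\partial^{\beta_i}u\rVert_{L^{2k/|\beta_i|}} \le C\,\lVert u\rVert_{L^\infty}^{1 - |\beta_i|/k}\,\lVert u\rVert_{H^k}^{|\beta_i|/k}$, and then apply H\"older's inequality with the exponents $2k/|\beta_i|$, whose reciprocals sum to $\tfrac12$, to bound the product in $L^2$ by $C\,\lVert u\rVert_{L^\infty}^{j-1}\,\lVert u\rVert_{H^m}$. Summing over all chain-rule terms then yields the Moser-type estimate $\lVert f(u)\rVert_{H^m} \le C\big(\lVert u\rVert_{L^\infty}\big)\big(1 + \lVert u\rVert_{H^m}\big)$, which shows $N_f$ maps $H^m(\Omega)$ into itself and is bounded on bounded sets. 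The delicate points here are the uniform control of $f^{(j)}$ over the compact image and keeping the interpolation exponents admissible.

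For continuity I would argue as follows. If $u_n \to u$ in $H^m$, the embedding gives $u_n \to u$ uniformly, so all images stay in one fixed compact $K$. Applying the same chain-rule decomposition to $f(u_n) - f(u)$, telescoping the products and splitting off the difference $f^{(j)}(u_n) - f^{(j)}(u)$, and combining the product estimates above with the uniform continuity of $f^{(m)}$ on $K$ (which uses $f \in C^m$), one obtains $\lVert f(u_n) - f(u)\rVert_{H^m} \to 0$. This establishes the $l=0$ case.

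Finally I would prove the $C^l$ statement by induction on $l$, the base case being continuity just proved. The candidate Fr\'echet derivative at $u$ is multiplication by $f'(u)$, that is $DN_f(u)h = f'(u)\,h$; since $H^m$ is an algebra, this is a bounded operator, and Taylor's theorem with integral remainder together with the algebra property bounds $\lVert f(u+h) - f(u) - f'(u)h\rVert_{H^m} \le C\,\lVert h\rVert_{H^m}^2$, confirming differentiability. The key observation is the factorization $DN_f = M \circ N_{f'}$, where $N_{f'}$ is the superposition operator associated with $f'$ and $M : H^m \to \mathcal{L}(H^m)$ sends a function to the (bounded, by the algebra property) operator of multiplication by it. Since $M$ is bounded linear, hence smooth, and since $f' \in C^{m+(l-1)}$, the induction hypothesis makes $N_{f'}$ of class $C^{l-1}$, so $DN_f$ is $C^{l-1}$ and $N_f$ is $C^l$. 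This bookkeeping also records that each additional derivative of $N_f$ consumes exactly one additional derivative of $f$, matching the hypothesis $f \in C^{m+l}$.
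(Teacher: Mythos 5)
Your proof is correct and follows essentially the same route as the paper, whose own proof is only a one-sentence sketch (``Sobolev embeddings and H\"older's inequality to bound the $L^2$ norms of the products appearing in the expressions for derivatives of $f(u(x))$''); your Fa\`a di Bruno decomposition with Gagliardo--Nirenberg interpolation and H\"older is exactly the standard way to fill in that sketch, and the induction for the $C^l$ statement via $DN_f = M\circ N_{f'}$ is the natural completion of the part the paper leaves unaddressed. One cosmetic point: for $l=1$ you only have $f\in C^{m+1}$, so the claimed remainder bound $\lVert f(u+h)-f(u)-f'(u)h\rVert_{H^m}\le C\lVert h\rVert_{H^m}^2$ is not justified; the correct (and sufficient) estimate is $o(\lVert h\rVert_{H^m})$, obtained by applying the already-proved continuity of $N_{f'}$ inside the integral form of the remainder together with the algebra property.
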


\begin{proof}
The proof uses the Sobolev embeddings and H\"{o}lder's inequality to bound the $L^2$ norms of the products appearing in the expressions for derivatives of $f(u(x))$.
\end{proof}

\begin{lemma}\ \\
Suppose $f$ is complex analytic in a domain containing the image of $a(z,\psi)$. Then $a(z,\psi) \to f(a(z,\psi)) : Y_\sigma^m(\Pi) \to Y_\sigma^m(\Pi)$ is an analytic map for $m > 1$.
\end{lemma}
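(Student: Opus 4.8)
The plan is to reduce the statement to the superposition lemma for ordinary Sobolev spaces (the preceding lemma) by working with the two boundary traces that define the $Y_\sigma^m$-norm, and then to upgrade well-definedness to analyticity by exploiting the Banach-algebra structure of $Y_\sigma^m$. First I would check well-definedness. Fix $a \in Y_\sigma^m$. Since $m>1$, the boundary traces $a(\cdot\pm i\sigma,\cdot)$ lie in $H^m(\Pi)\hookrightarrow C^0(\Pi)$, and by the maximum principle $a$ extends to a bounded continuous function on the compact set $\overline{\T_\sigma}\times[0,1]$; hence the image of $a$ is compact and sits inside the open domain where $f$ is analytic. For each fixed $\psi$ the scalar map $z\mapsto f(a(z,\psi))$ is holomorphic on $\T_\sigma$, being a composition of holomorphic functions, so $f(a)$ is partially analytic in $x$. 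The boundary trace of the composition is the composition of the boundary trace, $f(a)(\cdot\pm i\sigma,\cdot)=f\big(a(\cdot\pm i\sigma,\cdot)\big)$, and, regarding $f$ as a $C^\infty$ map on a domain of $\C\cong\R^2$, the preceding superposition lemma (with $n=2<2m$) gives $f(a(\cdot\pm i\sigma,\cdot))\in H^m(\Pi)$. By the boundary-trace characterization of $Y_\sigma^m$ this yields $f(a)\in Y_\sigma^m$, and the same lemma furnishes continuity and local boundedness of $a\mapsto f(a)$.

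Next I would identify the derivative, claiming that $a\mapsto f(a)$ is complex-Fr\'echet differentiable with derivative the multiplication operator $h\mapsto f'(a)\cdot h$, where $f'(a)\in Y_\sigma^m$ by the first step applied to the analytic function $f'$, and the product is taken in the algebra $Y_\sigma^m$. To establish this I would use the pointwise Taylor expansion with integral remainder,
\[
 f(a+h) - f(a) - f'(a)\,h \;=\; h^2\int_0^1 (1-s)\, f''(a+sh)\, ds,
\]
valid pointwise on the strip once $\|h\|_{Y_\sigma^m}$ is small enough that $a+sh$ keeps its image in the domain of $f$ for all $s\in[0,1]$ (using $\|h\|_{\infty}\le C\|h\|_{Y_\sigma^m}$). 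Since $Y_\sigma^m$ is a Banach algebra for $m>1$, applying the algebra bound twice to the factor $h^2$ and once to the integral gives
\[
 \big\|f(a+h)-f(a)-f'(a)\,h\big\|_{Y_\sigma^m}\;\le\; C\,\|h\|_{Y_\sigma^m}^2\,\sup_{s\in[0,1]}\big\|f''(a+sh)\big\|_{Y_\sigma^m},
\]
and the supremum is bounded uniformly for small $h$ by the local boundedness of the superposition operator attached to $f''$ (first step again). Hence the remainder is $O(\|h\|_{Y_\sigma^m}^2)$, which is exactly Fr\'echet differentiability with the stated, manifestly $\C$-linear, derivative.

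Finally I would conclude analyticity. Having exhibited a genuine complex-Fr\'echet derivative (step two) together with local boundedness (step one), the map is Gâteaux-holomorphic and locally bounded, and such a map between complex Banach spaces is analytic by the standard theory of holomorphic maps in infinite dimensions. Equivalently, one may assemble the Taylor series $f(a+h)=\sum_{n\ge0}\tfrac1{n!}\,f^{(n)}(a)\,h^n$, whose terms lie in $Y_\sigma^m$ by the first step (each $f^{(n)}$ is analytic) and whose convergence for small $\|h\|_{Y_\sigma^m}$ follows from the algebra bound $\|f^{(n)}(a)\,h^n\|_{Y_\sigma^m}\le C^{n}\|f^{(n)}(a)\|_{Y_\sigma^m}\|h\|_{Y_\sigma^m}^n$ combined with Cauchy estimates: the chain rule controls the at most $m$ derivatives entering $\|f^{(n)}(a)\|_{Y_\sigma^m}$ by $\sup_{0\le j\le m}\|f^{(n+j)}\|_{L^\infty}\lesssim (n+m)!\,\rho^{-n}$, and the resulting $(n+m)!/n!\sim n^m$ grows only polynomially.

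The main obstacle is the second step: guaranteeing that the product and chain-rule structure appearing in the remainder (and in $\|f^{(n)}(a)\|$) is genuinely controlled in the anisotropic norm of $Y_\sigma^m$ rather than merely in a sup norm. This is precisely where the algebra property for $m>1$ and the reduction to the two $H^m(\Pi)$ boundary traces carry the argument; once those are in hand, the passage from differentiability to analyticity is routine.
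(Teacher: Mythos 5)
Your argument is correct and rests on the same two pillars as the paper's proof: the finite-regularity superposition lemma applied to the boundary traces $a(\cdot\pm i\sigma,\cdot)\in H^m(\Pi)$ to get well-definedness, and the Banach-algebra property of $Y_\sigma^m$ for $m>1$ to make the candidate derivative $h\mapsto f'(a)h$ a bounded operator, after which complex differentiability plus local boundedness yields analyticity by the standard theory of holomorphy in complex Banach spaces. The one place where you diverge is in how you certify that $f(a)$ is again partially analytic in $z$: the paper obtains this by composing the analytic Banach-valued map $z\mapsto a(z,\cdot):\T_\sigma\to H^m[0,1]$ with the superposition operator $u\mapsto f(u)$ on $H^m[0,1]$, which the preceding lemma (with $n=1$, so $m>1/2$ suffices there) makes analytic, so that $z\mapsto f(a(z,\cdot))$ is analytic as an $H^m[0,1]$-valued map by composition. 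You instead argue holomorphy of $z\mapsto f(a(z,\psi))$ for each fixed $\psi$; strictly speaking the second characterization of $Y_\sigma^m$ asks for holomorphy of the $H^m[0,1]$-valued map, and pointwise-in-$\psi$ holomorphy is weaker on its face. This is not a fatal gap --- point evaluations are a total subset of $(H^m[0,1])^*$ for $m\ge1$, and a locally bounded vector-valued function that is holomorphic against a total set of functionals is holomorphic, and you do have local boundedness in hand --- but you should either invoke that upgrade explicitly or adopt the paper's composition argument, which avoids the issue entirely. On the other side of the ledger, your explicit Taylor-remainder estimate
\[
\big\|f(a+h)-f(a)-f'(a)h\big\|_{Y_\sigma^m}\le C\,\|h\|_{Y_\sigma^m}^2\,\sup_{s\in[0,1]}\big\|f''(a+sh)\big\|_{Y_\sigma^m}
\]
supplies a detail the paper only asserts, namely that $f'(a)h$ really is the Fr\'echet derivative and not merely a formal candidate.
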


\begin{proof}
The function $z \to f(a(z,\cdot)$ is the composition of analytic function $z \to a(z,\cdot) : \T_\sigma \to H^m[0,1]$ and the map $a(z,\cdot) \to f(a(z,\dot)) : H^m[0,1] \to H^m[0,1]$. By the previous lemma, the latter map is well defined for $m > 1/2$ and complex differentiable. By the theory of holomorphy in complex Banach spaces (\cite{Mu}), it is thus analytic. The function $z \to f(a(z,\cdot)) : \T_\sigma \to H^m[0,1]$ is thus analytic. Furthermore, for $m>1$, we have $a(\cdot \pm i\sigma, \cdot) \to f(a(\cdot \pm i\sigma, \cdot)) : H^m(\Pi) \to H^m(\Pi)$ and thus $\lVert f(a(\cdot \pm i\sigma, \cdot)) \rVert_{H^m(\Pi)} < \infty$. We conclude $a \to f(a) : Y_\sigma^m \to Y_\sigma^m$ is well defined. The derivative of this map corresponds to multiplication by $f'(a)$. That is, $Df(a): u \to f'(a)u$. By the above results $f'(a) \in Y_\sigma^m$. Since this space is an algebra for $m > 1$, the Fr\'{e}chet derivative $Df(a)$ is well defined in $Y_\sigma^m \to Y_\sigma^m$. Thus $a \to f(a) : Y_\sigma^m \to Y_\sigma^m$ is complex differentiable and therefore analytic.
\end{proof}

\begin{corollary}\label{nonlinear}\ \\
Suppose $m > 3$. In a sufficiently small neighbourhood of solution $a(z,\psi) = \psi \in Y_\sigma^m(\Pi)$, the operator $T$ defined in \ref{operator} is complex analytic.
\end{corollary}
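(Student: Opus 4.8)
The plan is to verify analyticity of each of the three components of $T$ separately, using that analyticity between complex Banach spaces is preserved under finite sums, products, and compositions of analytic maps. The second and third components, $a \mapsto a(x,0) - f(x)$ and $a \mapsto a(x,1) - g(x)$, are affine: the trace maps $a \to a(\cdot,0), a(\cdot,1) : Y_\sigma^m \to X_\sigma^{m-1/2}$ are bounded linear (as recorded in the previous section), and subtraction of $f, g \in X_\sigma^{m-1/2}$ is linear, so these components are entire. Likewise the subtraction of $F \in H^{m-2}[0,1]$ in the first component is linear. Thus the whole task reduces to showing that the quasilinear operator $\Phi(a)$ defined in \ref{quasilinear} is analytic as a map $Y_\sigma^m(\Pi) \to Y_\sigma^{m-2}(\Pi)$ on a neighbourhood of $a = \psi$.

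First I would dispose of the differentiations. Each of $\partial_x, \partial_\psi : Y_\sigma^m \to Y_\sigma^{m-1}$ is bounded linear, hence analytic, and composing gives bounded linear maps $\partial_x^2, \partial_x\partial_\psi, \partial_\psi^2 : Y_\sigma^m \to Y_\sigma^{m-2}$. Composing further with the continuous inclusion $Y_\sigma^{m-1} \hookrightarrow Y_\sigma^{m-2}$, I place all five derivative quantities $a_x, a_\psi, a_{xx}, a_{x\psi}, a_{\psi\psi}$ into the single space $Y_\sigma^{m-2}$. For $m > 3$ this is the decisive space: it is an algebra (since $m-2 > 1$) and, because its isotropic Sobolev structure is modelled on $H^{m-2}(\Pi)$ with $\dim \Pi = 2$, it embeds continuously into $C(\Pi)$.

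Next I handle the denominators, which is where the base point enters. At $a = \psi$ one has $a_\psi \equiv 1$ and $a_x = a_{xx} = a_{x\psi} = a_{\psi\psi} \equiv 0$. By the $C(\Pi)$ embedding, on a small enough $Y_\sigma^m$-ball about $\psi$ the function $a_\psi$ stays uniformly close to $1$, so its image avoids a neighbourhood of $0$ in $\C$. Hence $z \mapsto 1/z$ is analytic in a domain containing the image of $a_\psi$, and the complex superposition lemma (applied in $Y_\sigma^{m-2}$, valid since $m-2 > 1$) shows that $a \mapsto 1/a_\psi$, and similarly $a \mapsto 1/a_\psi^2$ and $a \mapsto 1/a_\psi^3$, are analytic maps $Y_\sigma^m \to Y_\sigma^{m-2}$. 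Assembling, $\Phi(a)$ is a finite sum of products of these reciprocal terms with the polynomial expressions $a_{xx}$, $a_x a_{x\psi}$, and $(1 + a_x^2) a_{\psi\psi}$ in the derivative quantities; each factor is analytic into the algebra $Y_\sigma^{m-2}$, so their products and sum are analytic, giving $\Phi : Y_\sigma^m \to Y_\sigma^{m-2}$ analytic. Combining the three components then yields analyticity of $T$ near $(0,1,0,\psi)$.

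I expect the only genuine obstacle to be the regularity bookkeeping that forces the threshold $m > 3$: one must simultaneously guarantee that the powers $a_\psi^k$ remain invertible (requiring the sup-norm control afforded by $Y_\sigma^{m-2} \hookrightarrow C(\Pi)$) and that the products of the reciprocal terms with the second-derivative terms land back in the target $Y_\sigma^{m-2}$ (requiring its algebra property). Both conditions reduce to $m - 2 > 1$, so once the derivative quantities are collected in $Y_\sigma^{m-2}$ the remaining estimates are routine invocations of the preceding lemmas.
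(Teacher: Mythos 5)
Your proposal is correct and follows essentially the same route as the paper: reduce to the quasilinear term, treat the derivatives as bounded linear maps into $Y_\sigma^{m-2}$, use the Sobolev embedding to keep $a_\psi$ bounded away from zero near $a=\psi$, and invoke the superposition lemma together with the algebra property of $Y_\sigma^{m-2}$. The only cosmetic difference is that you factor the rational expression into reciprocals of $a_\psi$ times polynomial terms and multiply in the algebra, whereas the paper applies the superposition lemma directly to the five-variable rational function $\Phi(a_x,a_\psi,a_{xx},a_{x\psi},a_{\psi\psi})$; both hinge on the same hypotheses and the same threshold $m>3$.
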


\begin{proof}
The quasilinear map $\Phi(a)$ is the composition of bounded linear (and thus analytic) map
\begin{equation*}
a \to (a_x, a_\psi, a_{xx}, a_{x\psi}, a_{\psi \psi}) : Y_\sigma^m \to Y_\sigma^{m-2} \times \cdot \cdot \cdot \times Y_\sigma^{m-2}
\end{equation*}
and the superposition with a rational function
\begin{equation*}
(a_x, a_\psi, a_{xx}, a_{x\psi}, a_{\psi \psi}) \to \Phi(a_x, a_\psi, a_{xx}, a_{x\psi}, a_{\psi \psi}) : Y_\sigma^{m-2} \times \cdot \cdot \cdot \times Y_\sigma^{m-2}  \to Y_\sigma^{m-2}.
\end{equation*}
By the preceding lemma, this superposition operator is analytic for $m>3$ so long as the denominator is never zero, that is, when $a_\psi \neq 0$. Suppose $\lVert a - \psi \rVert_{Y_\sigma^m} < \varepsilon$. From the embedding $H^m(\Pi) \subset C^1(\Pi)$ for $m > 2$, we have
\begin{equation*}
\sup_{\lvert t \rvert < \sigma} \lVert a_\psi(\cdot + it, \cdot) - 1 \rVert_{\infty} \leq C \sup_{\lvert t \rvert < \sigma} \lVert a(\cdot + it, \cdot) - \psi \rVert_{H^m(\Pi)} \leq C \lVert a - \psi \rVert_{Y_\sigma^m}.
\end{equation*}
Taking $\varepsilon$ sufficiently small gives $a_\psi$ sufficiently close to $1$ that it never vanishes and by consequence, $a \to \Phi(a)$ is analytic.

Next, the identity map defines an embedding $F(\psi) \to F(z,\psi) : H^{m-2}[0,1] \to Y_\sigma^{m-2}$. This map is linear, bounded and thus analytic. The same is true for the restrictions maps $a(z,\psi) \to a(z,0)$ and $a(z,\psi)  \to a(z,1) : Y_\sigma^m \to X_\sigma^{m-1/2}$. The map $(f,g,F,a) \to T$ is thus analytic in each variable and thus an analytic operator in the spaces $X_\sigma^{m-1/2} \times X_\sigma^{m-1/2}\times H^{m-2}[0,1] \times Y_\sigma^m \to Y_\sigma^{m-2} \times X_\sigma^{m-1/2} \times X_\sigma^{m-1/2}$. 
\end{proof}


\subsection{Linearization}

\begin{proposition}\label{linear} \ \\
The linearization of $T$ with respect to $a(z,\psi)$ at the constant parallel flow, given by operator
\begin{equation*}
a(z,\psi) \rightarrow (-\Delta a(z,\psi), a(z,0), a(z,1)) : Y_\sigma^m \rightarrow Y_\sigma^{m-2} \times X_\sigma^{m-1/2} \times X_\sigma^{m-1/2},
\end{equation*}
defines a Banach space isomorphism.
\end{proposition}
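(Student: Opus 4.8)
The plan is to diagonalize the problem with the partial Fourier series in $x$. Since both the Laplacian $\Delta = \partial_x^2 + \partial_\psi^2$ and the trace maps $a \mapsto a(\cdot,0),\, a(\cdot,1)$ commute with translation in $x$, writing $a(x,\psi) = \sum_k \hat a_k(\psi) e^{ikx}$ decouples the equation $(-\Delta a,\, a(\cdot,0),\, a(\cdot,1)) = (F, f_0, f_1)$ into the family of two-point boundary value problems
\begin{equation*}
-\hat a_k''(\psi) + k^2 \hat a_k(\psi) = \hat F_k(\psi), \qquad \hat a_k(0) = \hat f_{0,k}, \qquad \hat a_k(1) = \hat f_{1,k},
\end{equation*}
indexed by $k \in \Z$. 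For each $k$ the homogeneous operator $-D^2 + k^2$ with Dirichlet conditions has trivial kernel (its eigenvalues are $k^2 + (\pi n)^2 > 0$), so each problem has a unique solution; this already yields injectivity, and will give surjectivity once the solution is shown to lie in $Y_\sigma^m$. Boundedness of the forward map is immediate from the facts, recorded in Section 2, that $\Delta : Y_\sigma^m \to Y_\sigma^{m-2}$ and the traces $Y_\sigma^m \to X_\sigma^{m-1/2}$ are bounded. Hence, by the open mapping theorem, the whole statement reduces to a single a priori estimate for the solution operator, uniform in $k$.

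The estimate I would prove, for a constant $C$ independent of $k$, is
\begin{equation*}
\sum_{p+q \leq m} k^{2q} \lVert D^p \hat a_k \rVert_{L^2[0,1]}^2 \leq C \Big( \sum_{p+q \leq m-2} k^{2q} \lVert D^p \hat F_k \rVert_{L^2[0,1]}^2 + (1+k^2)^{m-1/2}\big( |\hat f_{0,k}|^2 + |\hat f_{1,k}|^2 \big) \Big).
\end{equation*}
Multiplying by $e^{2\sigma|k|}$ and summing over $k$ converts this exactly into $\lVert a \rVert_{Y_\sigma^m}^2 \lesssim \lVert F \rVert_{Y_\sigma^{m-2}}^2 + \lVert f_0 \rVert_{X_\sigma^{m-1/2}}^2 + \lVert f_1 \rVert_{X_\sigma^{m-1/2}}^2$; the exponential weight is identical on both sides and so passes through the $k$-uniform estimate with no loss. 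I would establish the displayed inequality by splitting $\hat a_k = \hat a_k^{\mathrm{int}} + \hat a_k^{\mathrm{bdy}}$ into the solution with zero boundary data and forcing $\hat F_k$, and the harmonic extension of the boundary data with zero forcing.

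For the interior part, pairing the equation with $\hat a_k^{\mathrm{int}}$ gives the identity $\lVert D\hat a_k^{\mathrm{int}} \rVert^2 + k^2 \lVert \hat a_k^{\mathrm{int}} \rVert^2 = \operatorname{Re}\langle \hat F_k, \hat a_k^{\mathrm{int}}\rangle$, whence the base bounds $k^2\lVert \hat a_k^{\mathrm{int}}\rVert,\ |k|\,\lVert D\hat a_k^{\mathrm{int}}\rVert,\ \lVert D^2 \hat a_k^{\mathrm{int}}\rVert \lesssim \lVert \hat F_k \rVert_{L^2}$ for $k \neq 0$ (the last from the equation itself), with the zero mode controlled by Poincar\'e's inequality. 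The higher-order bounds follow by bootstrapping: the relation $D^2 \hat a_k^{\mathrm{int}} = k^2 \hat a_k^{\mathrm{int}} - \hat F_k$ lets me express every $D^p\hat a_k^{\mathrm{int}}$ with $p \geq 2$ as $k^{2}$ times a lower $\psi$-derivative plus a derivative of $\hat F_k$; iterating trades each pair of $\psi$-derivatives for a factor $k^2$ or a derivative of $F$, which is precisely the isotropic book-keeping matching the index set $p+q\le m$ on the left with $p+q\le m-2$ on the right. For the boundary part, the explicit solution is built from $\phi_k(\psi) = \sinh(|k|\psi)/\sinh|k|$ and its reflection; the elementary estimate $\lVert D^p\phi_k\rVert_{L^2[0,1]} \lesssim |k|^{p-1/2}$ (the profile concentrates in a layer of width $\sim 1/|k|$) gives $k^{2q}\lVert D^p\phi_k\rVert^2 \lesssim |k|^{2(p+q)-1} \lesssim (1+k^2)^{m-1/2}$, reproducing the half-order trace shift built into the $X_\sigma^{m-1/2}$ spaces.

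I expect the main obstacle to be this last point: verifying that the half-integer Sobolev gain of the harmonic extension is matched precisely by the weights in the definitions of $Y_\sigma^m$ and $X_\sigma^{m-1/2}$, and confirming the exponential weight $e^{2\sigma|k|}$ is genuinely inert. As a cross-check on the estimate I would also invoke the analytic-continuation characterization of the spaces: because $\Delta$ is translation invariant, the slice $a(\cdot \pm i\sigma, \cdot)$ solves the real Dirichlet problem on $\Pi$ with data $F(\cdot\pm i\sigma,\cdot)$, $f(\cdot\pm i\sigma)$, $g(\cdot\pm i\sigma)$, so the classical $H^m(\Pi)$ elliptic estimate applies slice-by-slice and, summed over the two slices, recovers the $Y_\sigma^m$ and $X_\sigma^{m-1/2}$ norms. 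The Fourier route, however, is what rigorously establishes membership in $Y_\sigma^m$, and hence surjectivity, in the first place.
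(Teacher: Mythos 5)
Your proposal is correct and follows essentially the same route as the paper's proof: diagonalization by partial Fourier series in $x$, splitting each mode into the harmonic extension of the boundary data (estimated via the explicit $\sinh$ profile, yielding the $|k|^{p-1/2}$ gain that matches the $X_\sigma^{m-1/2}$ weight) plus the zero-boundary-data part (energy estimate by pairing with $\hat a_k$, then bootstrapping higher $\psi$-derivatives through the ODE), and finally summing with the inert weight $e^{2\sigma|k|}$. The only cosmetic differences are your explicit appeal to the open mapping theorem and the slice-by-slice cross-check, neither of which changes the substance.
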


\begin{proof}
That this operator is bounded follows immediately from boundedness of derivatives $\partial_x, \partial_\psi : Y_\sigma^m \to Y_\sigma^{m-1}$ and restrictions $a \to a(\cdot, \psi): Y_\sigma^m \to X_\sigma^{m-1/2}$. To show that this operator is invertible, we must solve the Dirichlet problem for the Poisson equation on the domain $\Omega = \T \times [0,1]$ :
\begin{equation*}
\Delta a(x,\psi) = f(x,\psi), \qquad a(x,0) = b(x), \qquad a(x,1) = c(x).
\end{equation*}
Expanding all functions as Fourier series in $x$, we get the family of ODEs
\begin{equation*}
\hat{a}''_k(\psi) - k^2 \hat{a}_k(\psi) = \hat{f}_k(\psi), \qquad \hat{a}_k(0) = \hat{b}_k, \qquad \hat{a}_k(1) = \hat{c}_k.
\end{equation*}
We construct the solution as follows. In the case when $k=0$, straight integration yields
\begin{equation*}
\hat{a}_0(\psi) = \int_0^\psi \int_0^\eta \hat{f}_0(t) dt d\eta + \left(\hat{c}_0 - \hat{b}_0 - \int_0^1 \int_0^\eta \hat{f}_0(t) dt d\eta\right)\psi + \hat{b}_0.
\end{equation*}
For general $k$, let's write the solution as $\hat{a}_k(\psi) = \hat{h}_k(\psi) + \hat{g}_k(\psi)$ where $\hat{h}_k$ is the solution to the homogeneous part (with non-homogeneous boundary conditions) and $\hat{g}_k$ is the general part (with homogeneous boundary conditions). Then we get
\begin{equation*}
\hat{h}_k(\psi) = \frac{\hat{c}_k\sinh(k\psi)-\hat{b}_k\sinh[k(\psi-1)]}{\sinh(k)},
\end{equation*}
and
\begin{equation*}
\hat{g}_k(\psi) = \int_0^1 G_k(\psi,t) \hat{f}_k(t) dt,
\end{equation*}
where
\begin{equation*} G_k(\psi,t) = 
\begin{cases}  \dfrac{\sinh[k(t-1)]\sinh(k\psi) }{k \sinh(k)} \qquad  \ \ \ \psi \leq t \\ \\
\dfrac{\sinh(kt)\sinh[k(\psi-1)] }{k \sinh(k)} \qquad \psi \geq t
\end{cases}
\end{equation*}
is the Green's function. The solution to the Dirichlet problem is given by 
\begin{equation*}
a(x,\psi) = h(x,\psi) + g(x,\psi) = \sum_k \hat{h}_k(\psi)e^{ikx} + \sum_k \hat{g}_k(\psi)e^{ikx}.
\end{equation*}

Next we show that this solution belongs to  $Y_\sigma^m$. That is,
\begin{equation*}
\|a\|_{Y_\sigma^m}^2 = \sum_{p+q\leq m} \sum_k e^{2\sigma|k|} \left(k^2\right)^q \| D^p \hat{a}_k(\psi) \|_{L^2(0,1)}^2 < \infty.
\end{equation*}
First let's check $h(x,\psi)$. We have
\begin{equation*}
D^{p} \hat{h}_k(\psi)= \frac{k^{p}}{\sinh(k)} \left(\hat{c}_k\sinh(k\psi)- k^{p} \hat{b}_k\sinh[k(\psi-1)] \right)
\end{equation*}
when $p$ is even and
\begin{equation*}
D^{p} \hat{h}_k(\psi)= \frac{k^{p}}{\sinh(k)} \left( \hat{c}_k\cosh(k\psi)-k^{p} \hat{b}_k\cosh[k(\psi-1)] \right)
\end{equation*}
when $p$ is odd. Notice that,
\begin{equation*}
\int_0^1 \sinh^2(k\psi) d\psi = \int_0^1 \sinh^2[k(\psi-1)] d\psi = \frac{\sinh(k)\cosh(k) - k}{2k}
\end{equation*}
and
\begin{equation*}
\int_0^1 \cosh^2(k\psi) d\psi = \int_0^1 \cosh^2[k(\psi-1)] d\psi = \frac{\sinh(k)\cosh(k) + k}{2k}.
\end{equation*}
Therefore we get
\begin{equation*}
\| D^{p} \hat{h}_k(\psi)\|_{L^2(0,1)}^2 \leq (k^2)^{p} (\hat{b}_k^2 + \hat{c}_k^2) \frac{\sinh(k)\cosh(k) \pm k}{2k\sinh^2(k)} \approx \frac{(k^2)^{p}}{k} (\hat{b}_k^2 + \hat{c}_k^2).
\end{equation*}
It follows that 
\begin{equation*}
e^{2\sigma|k|}(k^2)^{q} \|D^p \hat{h}_k(\psi)\|_{L^2(0,1)}^2 \approx  e^{2\sigma|k|}(k^2)^{p + q - 1/2 }(\hat{b}_k^2 + \hat{c}_k^2) \approx e^{2\sigma|k|}(1+k^2)^{m - 1/2 }(\hat{b}_k^2 + \hat{c}_k^2)
\end{equation*}
since $p + q \leq m$. Summing over $k, p, q$ we get
\begin{equation*}
\|h(x,\psi)\|_{Y_\sigma^m}^2 \leq C \left(\|b(x)\|_{X_\sigma^{m-1/2}}^2 + \|c(x)\|_{X_\sigma^{m-1/2}}^2\right).
\end{equation*}

Now let's consider the term $g(x,\psi)$. Starting with the equation
\begin{equation*}
\hat{g}_k'' = k^2\hat{g}_k + \hat{f}_k,
\end{equation*}
differentiating an even number of times gives
\begin{equation*}
D^{2n}\hat{g}_k = k^{2n}\hat{g}_k + \sum_{j=0}^{n-1} k^{2(n-1-j)} D^{2j}\hat{f}_k
\end{equation*}
and differentiating an odd number of times gives
\begin{equation*}
D^{2n+1}\hat{g}_k = k^{2n} \hat{g}_k' + \sum_{j=0}^{n-1} k^{2(n-1-j)} D^{2j+1}\hat{f}_k.
\end{equation*}
To control the $L^2$ norm of $D^p \hat{g}_k$, we need to estimate the norms of $\hat{g}_k$ and $\hat{g}_k'$.
Recall $\hat{g}_k$ satisfies 
\begin{equation*}
\begin{cases} \hat{g}_k'' - k^2\hat{g}_k = \hat{f}_k \\ \hat{g}_k(0) = \hat{g}_k(1) = 0
\end{cases}
\end{equation*}
on the interval $[0,1]$. Multiplying both sides of the ODE by $\overline{\hat{g}}_k$ and integrating over the interval gives
\begin{equation*}
\int_0^1 \hat{g}_k'' \overline{\hat{g}}_k d\psi - k^2\int_0^1 \hat{g}_k \overline{\hat{g}}_k d\psi = \int_0^1 \hat{f}_k \overline{\hat{g}}_k d\psi.
\end{equation*}
Integrating by parts and using that $\hat{g}_k$ vanishes on the end points, we get
\begin{equation*}
\int_0^1 |\hat{g}_k'|^2 d\psi + k^2 \int_0^1 |\hat{g}_k|^2 d\psi = - \int_0^1 \hat{f}_k \overline{\hat{g}}_k d\psi.
\end{equation*}
Applying Poincar\'{e} inequality:
\begin{equation*}
\int_0^1 |\hat{g}_k|^2 d\psi \leq C\int_0^1 |\hat{g}_k'|^2 d\psi
\end{equation*}
and Cauchy-Schwarz inequality, we get
\begin{equation*}
(1+k^2) \|\hat{g}_k\|_{L^2(0,1)} \leq C\|\hat{f}_k\|_{L^2(0,1)}
\end{equation*}
and so
\begin{equation*}
\|\hat{g}_k\|_{L^2}^2 \leq C \frac{\|\hat{f}_k\|_{L^2}^2}{k^4}.
\end{equation*}
Also notice we have
\begin{equation*}
\int_0^1 |\hat{g}_k'|^2 d\psi \leq \int_0^1 |\hat{g}_k'|^2 + k^2 |\hat{g}_k|^2 d\psi \leq \left( \int_0^1 |\hat{f}_k|^2 d\psi \right)^{1/2} \left(\int_0^1 |\hat{g}_k|^2 d\psi \right)^{1/2}.
\end{equation*}
It follows that
\begin{equation*}
\|\hat{g}_k'\|_{L^2}^2 \leq \|\hat{f}_k\|_{L^2}  \|\hat{g}_k\|_{L^2} \leq C \frac{\|\hat{f}_k\|_{L^2}^2}{k^2}.
\end{equation*}
We now have control of $\hat{g}_k$ and $\hat{g}_k'$, which we can use to control higher derivatives:
\begin{equation*}
\| D^{2n}\hat{g}_k \|_{L^2}^2 \leq  C \sum_{j=0}^{n-1} (k^2)^{2n-2-2j} \| D^{2j}\hat{f}_k\|_{L^2}^2
\end{equation*}
and
\begin{equation*}
\| D^{2n+1}\hat{g}_k \|_{L^2}^2 \leq C(k^2)^{2n-1} \|\hat{f}_k\|_{L^2}^2 + C\sum_{j=0}^{n-1} (k^2)^{2n-2-2j} \| D^{2j+1}\hat{f}_k\|_{L^2}^2.
\end{equation*}
Combining the two cases, we write
\begin{equation*}
\| D^p \hat{g}_k\|_{L^2}^2 \leq \sum_{j=0}^{p-2} C_j \left( k^2 \right)^{p-2-j} \| D^j \hat{f}_k \|_{L^2}^2.
\end{equation*}
Multiplying both sides by $(k^2)^q$ and using that $p+q \leq m$ we get
\begin{equation*}
\left( k^2\right)^q \|D^p \hat{g}_k\|_{L^2}^2 \leq \sum_{j=0}^{m-2} C_j \left( k^2 \right)^{m-2-j} \|\hat{f}_k^{(j)}\|_{L^2}^2.
\end{equation*}
Finally, multiplying both sides by $e^{2\sigma|k|}$ and summing over $k,p,q$ we obtain
\begin{equation*}
\|g\|_{Y_\sigma^m}\leq C\|f\|_{Y_\sigma^{m-2}}.
\end{equation*}
Combining bounds on the homogeneous and general part of the solution, we obtain $\|a\|_{Y_\sigma^m}^2 \leq C (\|f\|_{Y_\sigma^{m-2}}^2 + \|b\|_{X_\sigma^{m-1/2}}^2 + \|c\|_{X_\sigma^{m-1/2}}^2)$ and conclude the bounded inverse map to the linearization exists.
\end{proof}

\section{Complex analytic structure of stationary flow lines}

By the implicit function theorem, the results \ref{nonlinear} and \ref{linear} of the previous section prove our main result:

\begin{theorem}\ \\
Suppose $m > 3$. Then in a neighbourhood of
\begin{equation*}
(f,g,F,a) = (0,1,0,\psi) \in X_\sigma^{m-1/2} \times X_\sigma^{m-1/2} \times H^{m-2}[0,1] \times Y_\sigma^m, 
\end{equation*}
equation \ref{NLBVP} has locally a unique solution
\begin{equation*}
(f,g,F) \to a : X_\sigma^{m-1/2} \times X_\sigma^{m-1/2} \times H^{m-2}[0,1] \to Y_\sigma^m,
\end{equation*}
depending analytically on its parameters.
\end{theorem}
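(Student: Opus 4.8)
The plan is to apply the Analytic Implicit Function Theorem directly, using the operator $T$ from \ref{operator} as the analytic map, treating $(f,g,F)$ as the parameter and $a$ as the unknown. First I would make the identification with the abstract theorem explicit by setting the parameter space
\begin{equation*}
\mathcal{X} = X_\sigma^{m-1/2} \times X_\sigma^{m-1/2} \times H^{m-2}[0,1],
\end{equation*}
the solution space $\mathcal{Y} = Y_\sigma^m(\Pi)$, and the target $\mathcal{Z} = Y_\sigma^{m-2}(\Pi) \times X_\sigma^{m-1/2}(\T) \times X_\sigma^{m-1/2}(\T)$, so that $T : \mathcal{X} \times \mathcal{Y} \to \mathcal{Z}$. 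The base point is $(x_0,y_0) = ((0,1,0),\psi)$, and the equation $T=0$ is precisely the boundary value problem \ref{NLBVP}.

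Next I would verify the three hypotheses of the theorem in turn. The vanishing condition $T((0,1,0),\psi)=0$ holds because the constant parallel flow $a=\psi$ solves \ref{NLBVP} with the indicated data, as already observed. Analyticity of $T$ in a neighbourhood of the base point is supplied by Corollary \ref{nonlinear}, which requires $m>3$; this is the source of the regularity hypothesis in the statement. Finally, I would check that the partial derivative $\partial T/\partial a$ at the base point coincides with the operator studied in Proposition \ref{linear}: differentiating the first component uses the computation $\Phi' = -\Delta$, while the two boundary components differentiate to the restriction maps $a \mapsto a(\cdot,0)$ and $a \mapsto a(\cdot,1)$, so that $\partial T/\partial a\,((0,1,0),\psi)$ is exactly $a \mapsto (-\Delta a, a(\cdot,0), a(\cdot,1))$. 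Proposition \ref{linear} then guarantees this is a Banach space isomorphism $\mathcal{Y} \to \mathcal{Z}$.

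With all hypotheses in place, the Analytic Implicit Function Theorem yields a neighbourhood of the base point in which $T=0$ has a unique solution, parameterized by an analytic map $(f,g,F) \mapsto a$ from $\mathcal{X}$ to $\mathcal{Y}$, which is precisely the assertion of the theorem.

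As for the main obstacle: the substantive work has already been carried out in Corollary \ref{nonlinear} and Proposition \ref{linear}, so the concluding step is essentially an assembly rather than a new argument. The only points demanding care are the bookkeeping that ties the abstract theorem's partial derivative to the explicit operator of Proposition \ref{linear}, and the propagation of the constraint $m>3$ (needed both for the superposition operator in the quasilinear term $\Phi$ to act analytically and for the relevant spaces to remain algebras). I would therefore expect no genuine difficulty beyond confirming that the two preceding results are invoked at the correct base point and in the correct spaces.
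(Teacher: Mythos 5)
Your proposal is correct and follows exactly the paper's own argument: the theorem is deduced by applying the Analytic Implicit Function Theorem to the operator $T$, with analyticity supplied by Corollary \ref{nonlinear} and invertibility of the linearization by Proposition \ref{linear}. Your write-up simply makes explicit the bookkeeping that the paper leaves implicit.
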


Our result proves that near the constant parallel flow, the stationary flows in a periodic channel having no stagnation points form an analytic Banach manifold in the space of complex analytic flow lines. The theorem demonstrates that not only is analyticity of intermediate flow lines inherited from the boundary, the behaviour of their complex singularities is as well.

The general principle behind this work is the description of a stationary flow as a collection of its analytic flow lines. In this work, we have realized this picture for flows without stagnation points. In our following work, we extend the results to flows having a single elliptic stagnation point. New adaptions of spaces of partially-analytic functions must be introduced to accommodate the degeneracy of the equations at the stagnation point and the singular nature of solutions in our formulation.

Our description opens the way to more challenging problems: parameterizations near general solutions having a single stagnation point or none, patching such solutions together to describe stationary flows with an arbitrary number of stagnation points, properties of transition maps between charts on the prospective manifold of stationary flows, etc. Finally, can the spirit of this work be extended to accommodate other classes of flows, for instance, time-periodic solutions? One can envision giving systems of flow lines room to `breath' to produce such solutions.

}
\bibliographystyle{amsplain}
\providecommand{\bysame}{\leavevmode\hbox to3em{\hrulefill}\thinspace}
\providecommand{\MR}{\relax\ifhmode\unskip\space\fi MR }
\providecommand{\MRhref}[2]{%
  \href{http://www.ams.org/mathscinet-getitem?mr=#1}{#2}
}
\providecommand{\href}[2]{#2}

\end{document}